\newtheorem{theorem}{Theorem}
\newtheorem{proposition}{Proposition}
\newtheorem{definition}{Definition}
\newenvironment{proof}{\par \noindent \textit{Proof}.}{\hfill $\Box$\newline}
\journal{Journal of Algebra}
\begin{document}

\begin{frontmatter}

\title{Polynomial recognition of cluster algebras of finite type}

\author{ {\bf {\large Elis\^{a}ngela Silva Dias,
 Diane Castonguay}}}

\address{{\small Instituto de Inform\'{a}tica, Universidade Federal de Goi\'{a}s -- UFG} \\
 {\small Alameda Palmeiras, Quadra D, Campus Samambaia -- Goi\^{a}nia -- Goi\'{a}s -- Brazil} \\
 {\small CEP 74001-970 -- Phone numbers: +556281180095/+556281228259}\\
 {\small E-mail: \{elisangela,diane\}@inf.ufg.br}}

\begin{abstract}
Cluster algebras are a recent topic of study and have been shown to be a useful tool to characterize structures in several knowledge fields. An important problem is to establish whether or not a given cluster algebra is of finite type. Using the standard definition, the problem is infeasible since it uses mutations that can  lead to an infinite process. Barot, Geiss and Zelevinsky (2006) presented an easier way to verify if a given algebra is of finite type, by testing that all chordless cycles of the graph related to the algebra are cyclically oriented and that there exists a positive quasi-Cartan companion of the skew-symmetrizable matrix related to the algebra. We develop an algorithm that verifies these conditions and decides whether or not a cluster algebra is of finite type in polynomial time. The second part of the algorithm is used to prove that the more general problem to decide if a matrix has a positive quasi-Cartan companion is in $\mathcal{NP}$.
\end{abstract}

\begin{keyword}
Cluster algebra. Finite type. Algorithm.

{\bf Math. Subj. Classification 2010:} 13F60, 68Q25.
\end{keyword}

\end{frontmatter}

\section{Introduction}
\label{sec:introduction}

Matrices can be used to represent various structures arising in combinatorics, including graphs and algebras, such as cluster algebras. The latter can be defined using a directed graph $G(B)$, called quiver, and consequently by an adjcency matrix, where rows and columns represent the vertices and the positive values at positions $(i, j)$ represent the quantity of edges between associated vertices of the graph.

In 2002, Sergey Fomin and Andrei Zelevinsky~\cite{FZ2002} introduced a commutative algebra class called cluster algebras. These algebras have a strong combinatorial structure. They are tools to study questions of dual canonical bases and positivity of semisimple Lie groups. Cluster algebras are defined recursively via commutative algebras with a distinct set of generating variables ({\it cluster variables}) grouped into overlapping subsets ({\it clusters}) of fixed cardinality.

A basic feature of any cluster algebra class is that both the generators and the relationships between them are not given from the start, but are produced by an elementary iterative process of seed mutation. This process is somewhat counterintuitive, in order to encode a universal phenomenon in some way. This may explain the accelerated development of cluster algebra theme in areas such as combinatorics, physics, mathematics (especially geometry), among others as discussed in~\cite{GSV2010,GG2014,MSW2011}. This algebra can be defined using a skew-symmetrizable matrix, as we will see in Section~\ref{sec:finite_type}.

The notion of quasi-Cartan matrices was introduced by Barot, Geiss and Zelevinsky~\cite{BGZ2006}. A quasi-Cartan matrix is a symmetrizable matrix with all entries of the main diagonal equal to 2. The authors show some properties of the matrices from the mathematical point of view. A quasi-Cartan companion is a quasi-Cartan matrix associated with skew-symmetrizable matrix as we will see in Section~\ref{sec:preliminaries}.

One can decide whether a cluster algebra is of finite type (has a finite number of cluster variables) by deciding whether or not the skew-symmetrizable matrix is associated with a cyclically oriented graph and has a positive quasi-Cartan companion. By the Sylvester criterion~\cite{W1976}, a symmetric matrix is positive if all its leading principal submatrices have positive determinant.

In this paper, we present three algorithms of polynomial time complexity. The first one can be used to decide whether or not a quasi-Cartan companion matrix is positive. This is used as a certificate to prove that the problem of setting whether a positive quasi-Cartan companion exists or not belongs to the $\mathcal{NP}$ class of problems. The second algorithm can be used to establish whether or not there exists a positive quasi-Cartan companion of a skew-symmetrizable matrix associated with a cyclically oriented graph. To verify that an oriented graph is cyclically oriented, we will use the algorithm and results presented in~\cite{CD2015}. The last algorithm can be used to decide whether or not a cluster algebra is of finite type. This is used to prove that the problem belongs to the $\mathcal{P}$ class of problems. For more information about $\mathcal{P}$, $\mathcal{NP}$ and $\mathcal{NP}$-complete classes, see~\cite{CLSR2002,S2006}.

The remainder of the paper is organized as follows: Section~\ref{sec:preliminaries} presents the preliminary concepts; Section~\ref{sec:finite_type} defines what is a cluster algebra of finite type and presents some of its properties; In Section~\ref{sec:cyclically_oriented}, we present some properties of cyclically oriented graphs and the general idea of the polynomial algorithm for its determination presented is in~\cite{CD2015}; Section~\ref{sec:cqc_positive} presents a polynomial algorithm for determining if a skew-symmetrizable matrix has a positive quasi-Cartan companion when the associated graph is cyclically oriented; Section~\ref{sec:polynomial} shows that deciding whether a cluster algebra is of finite type is a problem belonging to the class of polynomial problems. Finally, Section~\ref{sec:conclusions} closes the paper with concluding remarks and future work.

\section{Preliminaries}
\label{sec:preliminaries}

Let $n$ be a positive integer, $A, B, C \in \mathrm{M}_n(\mathbb {Z})$ and $D \in \mathrm{M}_n(\mathbb{Q})$. A matrix $A$ is \textit {symmetric} if $A = A^T$, where $A^T$ is the transpose of $A$. A matrix $C$ is \textit {symmetrizable} if $D \times C$ is symmetric for some diagonal matrix $D$ with positive diagonal entries. In this case, the matrix $D \times C$ is the \textit{symmetrization or symmetrized} version of $C$ and the matrix $D$ is the \textit {symmetrizer} of $C$. Note that this definition is equivalent to the one given in~\cite{CM1980} and the matrix $D$ is taken over $\mathbb{Q}$ for the sake of simplicity. A matrix $C$ is \textit{symmetric by signs} if for all $i, j \in \{1, \dots, n\}$, with $i \neq j$, we have $c_{ij} = c_{ji} = 0$ or $c_{ij} \cdot c_{ji} > 0$.

All symmetric matrix are symmetrizable and that all symmetrizable matrices are symmetric by signs.

A matrix $A$ is \textit{skew-symmetric} if $A^T = -A$. Observe that the values of the main diagonal are null. A matrix $B$ is \textit{skew-symmetrizable} if there exists a diagonal matrix $D$ with positive entries such that $D \times B$ is a skew-symmetric matrix. In this case, the matrix $D \times B$ is the \textit{skew-symmetrization} or {\it skew-symmetrized} of $B$ and the matrix $D$ is the \textit{skew-symmetrizer} of $B$. A matrix $B$ is \textit{skew-symmetric by signs} if for all $i, j \in \{1, \dots, n\}$ we have $b_{ii} = 0$ and if $i \neq j$, then $b_{ij} = b_{ji} = 0$ or $b_{ij} \cdot b_{ji} < 0$.

Also observe that all skew-symmetric matrix are skew-symmetrizable and that all skew-symmetrizable matrices are skew-symmetric by signs.

A {\it generalized Cartan} matrix is a symmetric matrix, where its main diagonal entries are equal to 2 and its other values are non-positives. A {\it Cartan matrix} is a symmetrizable matrix with a positive definite symmetrized matrix. Cartan matrices were first introduced by the French mathematician \'{E}lie Cartan. In fact, Cartan matrices, in the context of Lie algebras, were first investigated by Wilhelm Killing, whereas the Killing form is due to Cartan. Positive Cartan matrices represent the basis of the Cartan-Killing classification, see~\cite{FZ2003}.

A symmetrizable matrix is \textit{quasi-Cartan} if all the entries on its main diagonal equal to 2. For a skew-symmetrizable matrix $B$, we will refer to a quasi-Cartan matrix $C$ with $|c_{ij}| = |b_{ij}|$ for all $i \neq j$ as a \textit {quasi-Cartan companion} of $B$.


Given a skew-symmetrizable matrix $B$, we associate $G(B)$ to an oriented graph with vertices $\{1, 2, \ldots, n\}$ and edges $(i, j)$ for each $b_ {ij} > 0$, with $i,j \in \{1, \ldots, n\}$.

Let $G$ be a simple graph, with edge set $E$ and vertex set $V$. Let $n$ be the number of vertices in $V$ and $m$ the number of edges in $E$. Cyclically orientable graphs were introduced in 2006 by Barot, Geiss and Zelevinsky~\cite{BGZ2006}.

A \textit{simple path} is a finite sequence of vertices $\langle v_1, v_2, \dots, v_t \rangle$ such that $(v_i, v_{i+1})\allowbreak \in E$ for $i \in \{1, \dots, t-1\}$ and no vertex is repeated in the sequence, that is,  $v_i\neq v_j$, for $i,j \in \{1, \dots, t\}$ and $i \neq j$. A \textit{cycle} is a simple path $\langle v_1, v_2, \dots, v_t \rangle$ such that $(v_t, v_1)\in E$. We denote a cycle with $t$ vertices by $C_t$.\footnote{Observe that our definition of a cycle, as in~\cite{DCLJ2014}, does not repeat the first vertex at the end of the sequence as usually done by other authors.} A {\it chord} of a cycle is an edge between two vertices of the cycle, that is not part of it. A cycle without chord is a {\it chordless cycle}.

An {\it orientation} of a graph $G(B)$ consists of assigning an order to the endpoints of each of its edges. The orientation of a cycle $\langle v_1, v_2 \ldots,v_t\rangle$ is {\it cyclic} if it receives the orientations $(v_1, v_2), \ldots, (v_{t-1}, v_t), (v_t, v_1)$ or the opposite.

An oriented graph $G(B)$ is {\it cyclically oriented} if any chordless cycle in $G(B)$ is cyclic. It is {\it cyclically orientable} if it admits an orientation in which $G(B)$ is cyclically oriented.

\section{Cluster algebras of finite type}
\label{sec:finite_type}

Cluster algebras are associated with the mutation-equivalence classes of skew-symmetrizable matrices. Recall from~\cite{FZ2002} that, for each matrix index $k$, the mutation in direction $k$ transforms a skew-symmetrizable matrix $B$ into another skew-symmetrizable matrix $B' = \mu_k(B)$, whose entries are given by
\begin{equation}
b'_{ij} =
    \begin{cases}
          -b_{ij}, & \hbox{if $i=k$ or $j=k$;} \\
          b_{ij} + sgn(b_{ik}) \cdot [b_{ik} \cdot b_{kj}]_+, & \hbox{otherwise.}
               \end{cases}
\end{equation}
where we use the notation $[x]_+ = \max(x,0)$, with the convention $sgn(0)=0$ and $sgn(x) = x/|x|$. One can easily check that $\mu_k$ is involutive, implying that the repeated mutations in all directions give rise to the mutation-equivalence relation on skew-symmetrizable matrices. Each of these matrices define an {\it exchange rule}.

For an integer $n$, a {\it cluster algebra} of dimension $n$ is a commutative ring (unitary) without zero divisors, generated in the center of a fixed field $\mathcal{F}$, for a set (possibly infinite) of cluster variables. These variables are not fixed arbitrarily. The set of cluster variables is the union (not disjoint) of subsets of $n$ elements called clusters, which are related by the mutation change. For any cluster $\mathcal{X}$ and any cluster variable $x \in \mathcal{X}$, there is another cluster obtained by substitution of the variable $x$ by another $x'$ related by a binomial relationship as follows: $x \cdot x' = M_1 + M_2$, where $M_1$ and $M_2$ are defined using an exchange rule matrix. All cluster variables are recursively obtained in this way from an initial seed (a cluster with an exchange rule matrix). This process is a {\it seed mutation}. For more information, see~\cite{FZ2002}.

Cluster algebras of finite type are those that have a finite number of cluster variables. They coincide with the famous Cartan-Killing classification of semisimple Lie algebras.

The following criterion for deciding whether a skew-symmetrizable matrix corresponds to a cluster algebra of finite type is presented in~\cite{BGZ2006}.

\begin{theorem} [Barot, Geiss and Zelevinsky \cite{BGZ2006}] \label{teo_1.1} Given a cluster algebra $\mathcal{A}(B)$ associated with a skew-symmetrizable matrix $B$. Let $\mathcal{S}$ be the mutation-equivalence class of $B$. The following are equivalent:
\begin{enumerate}[label=(\alph*)]
	\item \label{eq-1} The cluster algebra $\mathcal{A}(B)$ is of finite type.
	\item \label{eq-2} $\mathcal{S}$ contains a matrix $B'$ such that the Cartan matrix $C$ with off-diagonal entries $c_{ij} = -|b'_{ij}|$ is positive.
	\item \label{eq-3} For every $B' \in \mathcal{S}$ and all $i \neq j$, we have $|b'_{ij} \cdot b'_{ji}| \leq 3$.
	\item \label{eq-4} Every chordless cycle in $G(B)$ is cyclically oriented, and $B$ has a positive quasi-Cartan companion.
\end{enumerate}
Furthermore, the Cartan-Killing type of the Cartan matrix $C$ in~\ref{eq-2} is uniquely determined by $\mathcal{S}$.
\end{theorem}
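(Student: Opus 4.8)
The plan is to arrange the four conditions around the chain \ref{eq-1} $\Leftrightarrow$ \ref{eq-2} $\Leftrightarrow$ \ref{eq-3}, which is the Fomin--Zelevinsky finite-type classification and which I would simply invoke, and then prove the new equivalence \ref{eq-2} $\Leftrightarrow$ \ref{eq-4}. Recall from the classification that $\mathcal{A}(B)$ is of finite type exactly when $\mathcal{S}$ contains an acyclic matrix $B'$ whose associated generalized Cartan matrix (diagonal $2$, off-diagonal $-|b'_{ij}|$) is of Dynkin type, i.e. positive definite after symmetrization. From this, \ref{eq-2}$\Rightarrow$\ref{eq-1} because a Dynkin seed produces only finitely many cluster variables (indexed by almost positive roots), \ref{eq-1}$\Rightarrow$\ref{eq-3} because a finite mutation class has uniformly bounded entries, and \ref{eq-3}$\Rightarrow$\ref{eq-2} is the step in which boundedness of every $2$-cycle product $|b'_{ij}b'_{ji}|$ forces an acyclic Dynkin representative. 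I would cite these and concentrate on \ref{eq-2} $\Leftrightarrow$ \ref{eq-4}.

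The central device for that equivalence is to transport a quasi-Cartan companion along mutations. Reading a companion $C$ of $B$ as the Gram matrix of vectors $e_1, \dots, e_n$ with $\langle e_i, e_i \rangle = 2$, mutation at $k$ should correspond to the integer base change replacing $e_k$ by $-e_k$ and each $e_i$ ($i \neq k$) by $e_i$ plus a suitable multiple of $e_k$; call the resulting Gram matrix $\mu_k(C)$. Being a base change, this is a congruence, hence preserves positivity, and one checks it produces off-diagonal entries $\pm|b'_{ij}|$. The sign bookkeeping is the delicate point, and I would control it with an \emph{admissibility} condition on a companion: a prescribed value, forced by positivity, for the product of the $c_{ij}$ around each chordless cycle of $G(B)$. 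The two facts to establish are: (i) if $C$ is an admissible positive companion of $B$, then $\mu_k(C)$ is an admissible positive companion of $\mu_k(B)$; and (ii) $B$ admits an admissible positive companion if and only if $B$ admits a positive companion \emph{and} every chordless cycle of $G(B)$ is cyclically oriented --- the backward implication being obtained by flipping signs of the basis vectors (which preserves both positivity and the quasi-Cartan companion property) to repair the cycle products, with the cyclic-orientation hypothesis exactly guaranteeing that no obstruction arises, and the forward implication recording that an admissible companion already witnesses both conditions.

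Granting (i) and (ii), the equivalence follows. For \ref{eq-2}$\Rightarrow$\ref{eq-4}: the Dynkin representative $B'$ has its own positive Cartan matrix as a companion, which is admissible because $G(B')$ has no cycles at all; iterating (i) back to $B$ gives an admissible positive companion of $B$, and then (ii) yields a positive companion of $B$ together with the cyclic-orientation property, which is \ref{eq-4}. For \ref{eq-4}$\Rightarrow$\ref{eq-2}: by (ii), $B$ has an admissible positive companion; by (i), so does every $B'' \in \mathcal{S}$; evaluating the principal $2\times 2$ minor $\det\!\left(\begin{smallmatrix} 2 & c_{ij} \\ c_{ji} & 2 \end{smallmatrix}\right) = 4 - c_{ij}c_{ji} = 4 - |b''_{ij} b''_{ji}|$ and using positivity gives $|b''_{ij} b''_{ji}| \le 3$ for all $i \neq j$, which is \ref{eq-3}, hence \ref{eq-2} by the classification. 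Finally, the uniqueness of the Cartan--Killing type along $\mathcal{S}$ follows because companion mutation realizes any two of the positive Cartan matrices occurring in \ref{eq-2} as congruent positive-definite integral lattices, and the Dynkin type is a discrete congruence invariant.

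I expect the main obstacle to be precisely the sign analysis behind (i) and (ii): proving that companion mutation is well defined with a consistent choice of signs, that it respects admissibility, and that cyclic orientation of chordless cycles is exactly the obstruction to realizing the admissible sign pattern while staying positive. This is subtle because a mutation $\mu_k$ can create or destroy chords of a cycle, so one must keep track of which cycles are chordless before and after and show the bookkeeping closes up; a complete argument needs the structural results on cyclically orientable graphs (chord decompositions of cycles, existence of acyclic representatives in the finite-type case), and for this reason we only use the statement here and refer to \cite{BGZ2006} for the full proof.
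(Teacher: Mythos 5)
The paper offers no proof of this theorem: it is quoted from Barot, Geiss and Zelevinsky \cite{BGZ2006} and used as a black box, so there is nothing in the paper to compare your argument against line by line. Judged against the original source, your outline is a faithful road map of the actual proof: the equivalence of \ref{eq-1}, \ref{eq-2} and \ref{eq-3} is indeed imported from the Fomin--Zelevinsky finite type classification \cite{FZ2003}, and the genuinely new equivalence \ref{eq-2} $\Leftrightarrow$ \ref{eq-4} is established in \cite{BGZ2006} by essentially the two devices you name: mutation of a quasi-Cartan companion as an integral change of basis of its Gram lattice (hence a positivity-preserving congruence), and an admissibility condition on the sign products over chordless cycles, which is the ``sign condition'' this paper records in Section~\ref{sec:cqc_positive}. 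Your derivation of \ref{eq-3} from an admissible positive companion via the $2\times 2$ principal minors, and of the uniqueness of the Cartan--Killing type via congruence invariance, also match the source.

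However, as a proof the proposal is incomplete exactly where you flag it, and the gap is not cosmetic: facts (i) and (ii) carry essentially all of the content of \ref{eq-2} $\Leftrightarrow$ \ref{eq-4}. For (i), the subtlety is that for an arbitrary companion $C$ the mutated Gram matrix $\mu_k(C)$ need not be a quasi-Cartan companion of $\mu_k(B)$ at all (the off-diagonal absolute values can fail to match those of $\mu_k(B)$); one needs positivity together with careful sign bookkeeping to force this, and that verification is a substantial part of \cite{BGZ2006}. For (ii), the forward direction is not a matter of ``recording'': that a positive companion forces every chordless cycle of $G(B)$ to be cyclically oriented is a separate lemma, proved by showing that the principal submatrix supported on a non-cyclically-oriented chordless cycle admits no positive companion; and the backward direction relies on the uniqueness-up-to-simultaneous-sign-changes statement (Proposition~\ref{prop_1.5} here). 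Since you defer all of this to \cite{BGZ2006}, what you have is an accurate summary of the original argument rather than a self-contained proof --- which, to be fair, is also all this paper does with the theorem.
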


Mutations are hard to control, so each of the conditions~\ref{eq-2} and~\ref{eq-3} in Theorem~\ref{teo_1.1} is hard to check in general, since there may be a very large (possibly infinite) number of matrices in $\mathcal{S}$. On the other hand, the condition~\ref{eq-4} leads us to a polynomial algorithm as we see in Section~\ref{sec:polynomial}. As with condition~\ref{eq-4}, we divide the verification into two parts: Section~\ref{sec:cyclically_oriented} presents the ideas of the algorithm to decide whether a graph $G(B)$ is cyclically oriented (that was proposed by us in~\cite{CD2015}) and in Section~\ref{sec:cqc_positive} we present two polynomial algorithms that, together, decide whether $B$ has a positive quasi-Cartan companion when $G(B)$ is cyclically oriented.

\section{Cyclically oriented}
\label{sec:cyclically_oriented}

In this section, we will present some properties of chordless cycles, cyclically orientable and cyclically oriented graphs.

Some properties of cyclically orientable graphs are given by Barot, Geiss and Zelevinsky~\cite{BGZ2006}. New characterizations were obtained by Speyer~\cite{S2005} and Gurvich~\cite{G2008} in later works, which enabled the development of algorithms for the recognition of cyclically orientable graphs. 

The proposition below is used in preprocessing  of our algorithm to determine whether a graph is cyclically oriented.

\begin{proposition} [Speyer~\cite{S2005}] \label{prop:LimiteDeArestas}
If $G$ is a cyclically orientable graph with $n$ vertices, then $G$ has at
most $2 \cdot n - 3$ edges.
\end{proposition}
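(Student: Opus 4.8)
The plan is to prove the bound $|E| \le 2n-3$ for a cyclically orientable graph $G$ on $n$ vertices by induction on $n$, exploiting the fact that cyclic orientability is a hereditary-type property that behaves well under adding a vertex of low degree. The base cases $n=1$ ($0 \le -1$ is vacuous, or one starts at $n=2$ with at most $1 = 2\cdot2-3$ edge) are immediate. For the inductive step, the key structural fact I would isolate first is that every cyclically orientable graph with at least two vertices contains a vertex $v$ of degree at most $2$. Granting this, one removes $v$ from $G$; the resulting graph $G-v$ is still cyclically orientable (any chordless cycle of $G-v$ is a chordless cycle of $G$, so a good orientation of $G$ restricts to a good one of $G-v$), has $n-1$ vertices, and by the induction hypothesis has at most $2(n-1)-3 = 2n-5$ edges. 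Adding back $v$ together with its at most $2$ incident edges yields at most $2n-5+2 = 2n-3$ edges, which closes the induction.

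The substantive step, and the main obstacle, is establishing the existence of a vertex of degree $\le 2$ in any cyclically orientable graph with $n \ge 2$ vertices. I would argue by contradiction: suppose every vertex has degree $\ge 3$. Then $|E| \ge 3n/2$, so the graph is far from a forest and certainly contains cycles; the point is to derive a contradiction with the chord-free structure forced by cyclic orientability. The idea is to look at a chordless cycle $\Gamma = \langle v_1, \dots, v_t \rangle$ in $G$ (one exists since $G$ has a cycle, and any cycle contains a chordless one) and use the combinatorial restrictions that Speyer establishes: in a cyclically orientable graph, two distinct chordless cycles can share at most a single edge, and any vertex lies on a controlled number of chordless cycles. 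Feeding in the minimum-degree-$3$ hypothesis, each vertex of $\Gamma$ has a neighbour outside $\Gamma$ (else it would create a chord or a forbidden configuration), and tracking how these external edges and the cycles they generate interact forces either a chord in $\Gamma$ or two chordless cycles sharing two or more edges — contradicting cyclic orientability. Making this counting rigorous is the delicate part; it is exactly where Speyer's finer characterizations of cyclically orientable graphs are needed.

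An alternative, cleaner route I would keep in reserve is a direct global edge count: if $G$ is cyclically orientable and connected, fix a spanning tree $T$ (contributing $n-1$ edges) and show each of the remaining $|E|-(n-1)$ non-tree edges can be charged injectively to a distinct chordless cycle, with these chordless cycles pairwise edge-disjoint except on tree edges; bounding the number of such cycles by $n-2$ would give $|E| \le (n-1)+(n-2) = 2n-3$. This again rests on Speyer's result that chordless cycles in a cyclically orientable graph overlap minimally, so the two approaches ultimately draw on the same core lemma. Either way, I expect the bulk of the work to be packaging the chordless-cycle intersection properties, and the induction or tree-counting wrapper around it to be routine. Since the statement is attributed to Speyer~\cite{S2005}, it is legitimate to cite that paper for the key structural lemma and present only the short inductive reduction in full detail.
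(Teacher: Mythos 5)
The paper offers no proof of this proposition at all --- it is imported verbatim from Speyer~\cite{S2005} --- so there is no in-paper argument to compare yours against, and I can only assess the sketch on its own terms. Your inductive wrapper is fine as far as it goes: induced subgraphs of cyclically orientable graphs are cyclically orientable (a chordless cycle of $G-v$ is a chordless cycle of $G$, exactly as you say), so deleting a vertex of degree at most $2$ removes at most $2$ edges and $2(n-1)-3+2=2n-3$ closes the induction from the base case $n=2$. The genuine gap is that the entire content of the proposition is concentrated in the lemma you defer: \emph{every cyclically orientable graph on at least two vertices has a vertex of degree at most~$2$}. Your proposed derivation of it --- assume minimum degree $3$, pick a chordless cycle, and argue that the external edges force a chord or two chordless cycles sharing two edges --- is never carried out, and as written it is not clear that it can be: you do not identify the ``forbidden configuration,'' and the intersection properties you invoke (two chordless cycles meet in at most one edge, each vertex lies on boundedly many chordless cycles) are themselves consequences of Speyer's structure theory rather than elementary facts. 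So the proposal is a correct reduction to an unproved claim that is essentially as deep as the proposition itself; the same objection applies to your spanning-tree alternative, whose charging scheme presupposes the same structural control of chordless cycles.

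The gap can be closed using only Theorem~\ref{teo1_Speyer}, which the paper does state, and it is cleaner to induct on that decomposition than on a low-degree vertex. For a two-connected cyclically orientable graph: a single edge has $2$ vertices and $1=2\cdot 2-3$ edges; a cycle $C_t$ has $t\le 2t-3$ edges for $t\ge 3$; and writing $G=G'\cup C_t$ with the cycle meeting $G'$ in a single edge adds $t-2$ vertices and $t-1$ edges, with $t-1\le 2(t-2)$ for $t\ge 3$, so the bound $|E|\le 2n-3$ is preserved at every step. For a connected graph whose $k$ two-connected components have $n_1,\dots,n_k$ vertices, $\sum_i(n_i-1)=n-1$ gives $|E|\le\sum_i(2n_i-3)=2(n-1+k)-3k=2n-2-k\le 2n-3$, and disconnected graphs are handled componentwise. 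This argument also delivers your missing degree lemma as a by-product, since the interior vertices of the last attached cycle have degree $2$; I would restructure the proof around Theorem~\ref{teo1_Speyer} rather than around the minimum-degree claim.
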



Now we will discuss two-connected components. They are important since any chordless cycle is contained in exactly one of the components. To calculate them, we can use an algorithm based on the ideas of Tarjan~\cite{T1972} and Szwarcfiter~\cite{S1988}, that has time complexity $\mathcal{O}(n^2)$, which is an application of depth-first search (DFS), described in Cormen et al.~\cite{CLSR2002}.

A graph $G$ is \textit{connected} when there exists a path between each pair of vertices of $G$, otherwise $G$ is {\it disconnected}. A {\it connected component} of a graph $G$ is a maximal connected subgraph of $G$. A graph is {\it two-connected} if it is connected and the elimination of at least two vertices is necessary to disconnect it. A single edge is a two-connected graph.

The following theorem deals with cyclic orientability in two-connected components. The idea is used in our algorithms.

\begin{theorem} [Speyer \cite{S2005}] \label{teo1_Speyer}
A graph $G$ is cyclically orientable if and only if all of its two-connected
components also are. A two-connected graph is cyclically orientable if and only if it is either a cycle, a single edge, or of the form $G' \cup C_t$, where $G'$ is a cyclically orientable graph, $C_t$ is a cycle and $G'$ and $C_t$ meet along a single edge. Moreover, if $G=G' \cup C_t$ is any such decomposition of $G$ into a cycle and a subgraph meeting along a single edge, then $G$ is cyclically orientable if and only if $G'$ is.
\end{theorem}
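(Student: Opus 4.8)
\noindent The plan is to establish the three assertions in turn, proving the structural description of two-connected cyclically orientable graphs together with the final ``moreover'' clause by induction on the number of edges. For the reduction to two-connected components, the point is that every cycle of $G$ is itself two-connected, hence lies inside a single block $H$, and any chord of that cycle joins two vertices of $H$ and is therefore also an edge of $H$; so a cycle is chordless in $G$ exactly when it is chordless in its block, and ``every chordless cycle is cyclic'' becomes a condition local to the blocks. Since distinct blocks share at most one vertex and no edge, cyclically oriented orientations of all the blocks glue to an orientation of $G$, and combining the two observations gives that $G$ is cyclically orientable if and only if each of its two-connected components is.

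\noindent For the ``if'' part of the structural statement, a single edge has no cycle and a cycle oriented cyclically is trivially cyclically oriented, so suppose $G=G'\cup C_t$ with $G'$ cyclically orientable, $C_t$ a cycle, and $V(G')\cap V(C_t)=\{u,v\}$, the edge $e=\{u,v\}$ being their only common edge. The key claim is that the chordless cycles of $G$ are exactly $C_t$ together with the chordless cycles of $G'$: if a chordless cycle $Z$ of $G$ met both $V(C_t)\setminus\{u,v\}$ and $V(G')\setminus\{u,v\}$ then, as $\{u,v\}$ separates these parts, $Z$ would pass through $u$ and $v$ and split into an arc in $C_t$ and an arc in $G'$, and unless the $G'$-arc is $e$ itself (forcing $Z=C_t$) the edge $e$ would be a chord of $Z$; a $Z$ inside $C_t$ equals $C_t$, and a $Z$ inside $G'$ is automatically chordless in $G'$, while conversely $C_t$ has no chord in $G$ and a chordless cycle of $G'$ stays chordless in $G$ because any new chord would be an edge of $C_t\setminus e$ with both ends among $V(C_t)\cap V(G')=\{u,v\}$, which is impossible. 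Granting this, one extends a cyclically oriented orientation of $G'$ by the unique cyclic orientation of $C_t$ agreeing with it on $e$, making $C_t$ and every chordless cycle of $G'$, hence every chordless cycle of $G$, cyclic; and the same correspondence, read the other way, proves the forward half of the ``moreover'', since restricting a cyclically oriented orientation of $G$ to $G'$ leaves every chordless cycle of $G'$ chordless in $G$ and hence cyclic.

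\noindent The substance is the ``only if'' direction: a two-connected cyclically orientable graph $G$ that is neither a single edge nor a cycle equals $G'\cup C_t$ for suitable $G'$ and $C_t$; I would prove this by induction on $|E(G)|$. First one shows $G$ has a vertex $v$ of degree $2$, using the bound $|E(G)|\le 2n-3$ of Proposition~\ref{prop:LimiteDeArestas} together with the exclusion of the small dense non-cyclically-orientable configurations such as $K_4$ and the $3$-prism. If the neighbours $a,b$ of $v$ are adjacent, the triangle on $\{v,a,b\}$ is a chordless cycle meeting $G-v$ along the edge $ab$, and $G-v$ is two-connected and cyclically orientable (restricting the orientation introduces no new chord at a cycle avoiding $v$), giving the decomposition. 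Otherwise $ab\notin E(G)$; then $v$ lies on a chordless cycle (two-connectivity puts it on a cycle, which must use both edges at $v$), so in a cyclic orientation $v$ is neither source nor sink, and smoothing $v$ --- replacing the path from $a$ to $b$ through $v$ by a new edge $ab$ --- gives a two-connected graph $G_1$ with one fewer edge, not a cycle, and still cyclically orientable. Applying the induction to $G_1=G_1'\cup C_s$ and then re-subdividing $ab$, one checks the cases by where $ab$ sat: if $ab$ was an edge of $C_s$ other than the gluing edge $f$ then $C_s$ grows to $C_{s+1}$ and $G=G_1'\cup C_{s+1}$; if $ab$ was an edge of $G_1'$ other than $f$ then subdividing it inside $G_1'$ produces a subgraph of $G$ whose chordless cycles are still chordless in $G$, hence cyclically orientable, and $G=G_1''\cup C_s$; and if $ab=f$ then the cycle $D'$ obtained from $C_s$ by inserting $v$ has every vertex except $a$ and $b$ of degree $2$, so it is pendant along an edge incident to whichever of $a,b$ has degree $2$, which again gives the decomposition, whereas if both $a$ and $b$ have degree at least $3$ then $D'$ and the second chordless cycle through $v$ would share the two edges at $v$, impossible in a cyclically orientable graph, so this sub-case does not occur.

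\noindent \textbf{The main obstacle.} The two genuinely substantial points are buried in the last paragraph: that a two-connected cyclically orientable non-cycle always has a vertex of degree $2$ --- the edge bound alone does not suffice, one must use the orientation to rule out the small obstructions --- and that in a cyclically orientable graph two distinct chordless cycles share at most one edge, which is what dispatches the residual sub-case of the induction. Those are where the cyclic-orientability hypothesis does the real work; the remainder of the argument is careful bookkeeping about how chords behave when an edge is deleted, smoothed, or subdivided inside an ambient cyclically orientable graph.
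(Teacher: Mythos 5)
The paper itself offers no proof of this statement: it is quoted verbatim from Speyer~\cite{S2005} and used as a black box, so there is no in-paper argument to compare yours against. Judged on its own terms, your architecture --- reduction to blocks via the observation that chordless cycles and their chords live inside a single two-connected component, the correspondence between chordless cycles of $G=G'\cup C_t$ and those of $G'$ plus $C_t$, and an induction that either deletes or smooths a degree-2 vertex --- is essentially the right one and is close to Speyer's own. The problem is that you defer exactly the two claims that carry all the mathematical weight, and the justification you sketch for the first would not work.

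First, the existence of a vertex of degree $2$ in a two-connected cyclically orientable graph that is neither a cycle nor a single edge. The bound of Proposition~\ref{prop:LimiteDeArestas} only forces a vertex of degree at most $3$, and there are infinitely many two-connected graphs of minimum degree $3$ satisfying $m\le 2n-3$ (any cubic two-connected graph with $n\ge 6$ has $3n/2\le 2n-3$ edges), so ``excluding $K_4$ and the $3$-prism'' cannot close this gap; one genuinely needs the orientability hypothesis here, typically routed through your second deferred claim. Second, the assertion that two distinct chordless cycles of a cyclically orientable graph share at most one edge is stated but never proved, and it is what both dispatches the residual sub-case $ab=f$ of your induction and, in a complete treatment, yields the degree-2 vertex. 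Until both are supplied, the induction does not get off the ground. Two smaller points: in the sub-case $ab=f$ with one of $a,b$ of degree $2$, the cycle $D'$ meets the rest of $G$ in a single \emph{vertex}, so two-connectedness gives a cut-vertex contradiction rather than the decomposition you claim (the sub-case is vacuous, but not for the reason you give); and the existence of a \emph{second} chordless cycle through $v$ when $ab=f$ needs the short argument that $G_1'$ is two-connected and not a single edge, so that $f$ lies on a chordless cycle of $G_1'$ whose subdivision is chordless in $G$.
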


\subsection{Algorithm to determine whether a graph is cyclically oriented}

Based on the results given by Speyer~\cite{S2005}, Dias and Castonguay in~\cite{CD2015} proposed a polynomial algorithm to verify if a graph is cyclically oriented. The algorithm can be slightly modified to verify whether or not a given oriented graph is cyclically oriented. Moreover, in the positive case, they show that the algorithm returns all chordless cycles of the graph.

We now highlight the ideas of the algorithm. It is based on the analysis of each two-connected component of a given graph. Following the idea presented by Speyer~\cite{S2005}, the algorithm identifies chordless cycles induced by cycle $C_t$ given by Theorem~\ref{teo1_Speyer}. This is done in order to reduce each two-connected component to a unique cycle.

Initially, the algorithm verifies if the given graph satisfies Proposition~\ref{prop:LimiteDeArestas}, that is, if the graph has $2 \cdot n - 3$ edges. If not, it returns the decision NO. Next, it finds all two-connected components and then verifies if each component satisfies Proposition~\ref{prop:LimiteDeArestas}. If not satisfied, it returns NO.

After that, the algorithm stores in a queue $Q$ all vertices of degree two for each two-connected component. Observe that all vertices will be added to $Q$ at most once  and if $G$ is cyclically orientable then all vertices will be added.

The algorithm tries, starting with the vertices of queue $Q$, to find and eliminate paths (cycles) in order to reduce the initial two-connected component to a cycle. If it is able to do so, the component is cyclically oriented. If not, it returns NO. This will continue for all components. In the final process, the given graph will be classified as cyclically orientable if all two-connected components receive a cyclically orientable classification; otherwise, the graph is classified as not cyclically orientable. Therefore, it determines in $\mathcal{O}(n^2)$ time whether this graph is cyclically orientable and, if so, it returns the set of all chordless cycles $S$ of $G$. Here, we will consider the set $S$ as a stack. Observe that there is at most $n$ chordless cycles in the whole graph.

Dias and Castonguay~\cite{CD2015} provided an algorithm to verify whether a non-oriented graph is cyclically orientable. Observe that this paper consider soriented graphs, therefore it is necessary to make small modifications in order that the algorithm can be used in Algorithm~\ref{alg:algebra_tipo_finito}. Consider the modified algorithm called $ChordlessCyclesCOd(G)$. If a graph is cyclically oriented it returns the set $S$ of all chordless cycles. For more information about the algorithm, see~\cite{CD2015}.

\section{Positive quasi-Cartan companion}
\label{sec:cqc_positive}

As we saw in Theorem~\ref{teo_1.1}, Barot, Geiss and Zelevinsky~\cite{BGZ2006} show that for a cluster algebra of finite type we have that the associated skew-symmetrizable matrix $B$ must have a positive quasi-Cartan companion $C$.

By the Sylvester criterion~\cite{W1976}, a symmetric matrix is positive if all leading principal submatrices have positive determinant. A {\it leading principal submatrix} is obtained by iteratively removing the last row and the last column of the matrix. Using~\cite{DCD2015}, a symmetrizable matrix is positive if all leading principal submatrices have positive determinant. 

Dias, Castonguay and Dourado~\cite{DCD2015} presented Algorithm~\ref{alg:is_positive} with time complexity $\theta(n^4)$ to decide whether the given matrix $C$ is positive. This algorithm is used as a verifier 
for the general problem of deciding if there exists a positive quasi-Cartan companion. Thus, the problem belongs to the $\mathcal{NP}$ class.

\begin{center}
\begin{minipage}{0.8 \textwidth}
\begin{algorithm2e}[H]
 \LinesNumbered
\small
 \BlankLine
 \KwIn{A $n \times n$ symmetrizable matrix $C$.}
 \KwOut{The response is if the matrix is positive or not.}
 \BlankLine
	
\ForEach{leading principal submatrix $C'$ of $C$}{
	\If{($det(C') \leq 0$)}{
		\Return NO
	}
}

\Return YES

\BlankLine

\caption{$IsPositive(C)$ \label{alg:is_positive}}
\end{algorithm2e}
\end{minipage}
\end{center}

Verifying all quasi-Cartan companions is exponential since if $B$ is a $n \times n$ skew-symmetrizable matrix there are $2^m$ matrices $C$ which are quasi-Cartan companions of $B$. Note that a quasi-Cartan companion of B is specified by choosing the signs of its off-diagonal matrix entries, with the only requirement being that $sgn(c_{ij}) = sgn(c_{ji})$ and $|c_{ij}| = |b_{ij}|$, for $i \neq j$. Dias, Castonguay and Dourado~\cite{DCD2015} showed that if $C$ is symmetric according to the signs, then $C$ is symmetrizable with the same symmetrizer of $B$.

Thus, it is not efficient to test all possible signs for a positive quasi-Cartan companion. However, Barot, Geiss and Zelevinsky~\cite{BGZ2006} showed a sign condition which simplifies the test of the existence of a quasi-Cartan companion without having to assign all possible signs.


\begin{definition}[Sign condition -- Barot, Geiss and Zelevinsky \cite{BGZ2006}] \label{prop_1.4}
Let $B$ be skew-symmetrizable matrix. A quasi-Cartan companion $C$ of $B$ satisfies the sign condition if, for every chordless cycle $C_t$ in $G(B)$, the product $\prod\limits_{(i, j) \in C_t} (-c_{ij})$ over all edges of $C_t$ is negative.
\end{definition}

In our case, the skew-symmetrizable matrix always has at least one quasi-Cartan companion that satisfies the sign condition.

\begin{proposition}[Barot, Geiss and Zelevinsky \cite{BGZ2006}] \label{prop_1.5} Let $B$ be a skew-symmetrizable matrix. If $G(B)$ is cyclically oriented, then $B$ has a quasi-Cartan companion (not necessarily positive) satisfying the sign condition; furthermore, such a quasi-Cartan companion is unique up to simultaneous sign changes in rows and columns.
\end{proposition}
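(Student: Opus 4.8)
The first step is to recast the statement in linear-algebra terms over $\mathbb{F}_2$. A quasi-Cartan companion $C$ of $B$ is nothing but a choice of a sign $\varepsilon_{ij}=\varepsilon_{ji}=\operatorname{sgn}(c_{ij})\in\{+1,-1\}$ for each edge $\{i,j\}$ of the underlying graph of $G(B)$ (the magnitudes being forced, $|c_{ij}|=|b_{ij}|$). The operation ``simultaneous sign change in rows and columns $k$'' negates exactly the entries $c_{kj}$ and $c_{ik}$ with $j,i\neq k$, leaves $c_{kk}=2$ fixed, and leaves all other off-diagonal entries alone; that is, it is a \emph{switching} at the vertex $k$, flipping precisely the $\varepsilon_e$ for edges $e$ incident to $k$. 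Writing $\varepsilon_e=(-1)^{x_e}$ with $x\in\mathbb{F}_2^E$, and using $\prod_{(i,j)\in\gamma}(-c_{ij})=(-1)^{|\gamma|}\bigl(\prod_e|c_{ij}|\bigr)\bigl(\prod_e\varepsilon_e\bigr)$, the sign condition of Definition~\ref{prop_1.4} on a chordless cycle $\gamma$ becomes the single affine equation $\langle x,\chi_\gamma\rangle=|\gamma|+1\pmod 2$, where $\chi_\gamma\in\mathbb{F}_2^E$ is the indicator vector of $\gamma$. I will also use that the set of switchings is the cut (cocycle) space $\mathcal{B}\subseteq\mathbb{F}_2^E$, that the cycle space $\mathcal{Z}$ of $G(B)$ is its orthogonal complement $\mathcal{Z}=\mathcal{B}^\perp$, and that a cut and a cycle always meet in an even number of edges, so $\langle y,\chi_\gamma\rangle=0$ for every $y\in\mathcal{B}$.

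With this setup, uniqueness becomes automatic once existence is established. The key observation is that the chordless cycles span the whole cycle space $\mathcal{Z}$: if a cycle has a chord, it splits along that chord into two strictly shorter cycles whose indicator vectors sum (mod $2$) to the original, so induction on length reduces any element of $\mathcal{Z}$ to a sum of chordless cycles. Consequently, if $x_0$ is one solution of the system $\{\langle x,\chi_\gamma\rangle=|\gamma|+1\}_{\gamma}$ ranging over chordless cycles, then the full solution set is $x_0+\bigl(\operatorname{span}\{\chi_\gamma\}\bigr)^\perp=x_0+\mathcal{Z}^\perp=x_0+\mathcal{B}$, a single coset of the switching subspace. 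This is exactly the asserted uniqueness up to simultaneous sign changes in rows and columns. So the entire content of the proposition lies in the existence claim.

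For existence, this is where cyclic orientability is used, via Speyer's structural Theorem~\ref{teo1_Speyer} (note that $G(B)$ being cyclically oriented makes it cyclically orientable, so the theorem applies). Since a chordless cycle is $2$-connected it is contained in a single two-connected component, and the components partition the edge set of $G(B)$; hence it suffices to exhibit a valid sign assignment on each component. On a two-connected component I would induct along the decomposition of Theorem~\ref{teo1_Speyer}. A single edge carries no chordless cycle, so any sign works; a cycle $C_t$ has itself as its only chordless cycle, and one simply picks the $t$ signs so that $\prod_{(i,j)\in C_t}(-c_{ij})<0$. In the remaining case $G=G'\cup C_t$ where $G'$ is cyclically orientable and $G'$ and $C_t$ meet along a single edge $e_0$, I would first prove the structural lemma that the chordless cycles of $G$ are exactly $C_t$ together with the chordless cycles of $G'$ (indeed, the interior vertices of $C_t$ have degree $2$ in $G$, so $C_t$ is chordless and no chordless cycle of $G'$ acquires a chord; and any cycle of $G$ meeting both the interior of $C_t$ and $G'\setminus e_0$ must pass through both endpoints of $e_0$, whence $e_0$ is a chord of it). Then, by the induction hypothesis, $G'$ admits a valid sign assignment $\varepsilon'$; since $\varepsilon'(e_0)$ is already fixed, imposing the sign condition on $C_t$ is one further $\mathbb{F}_2$-equation in the $t-1$ edges of $C_t$ outside $G'$, which is solvable (e.g. set all but one of them to $+1$ and choose the last to hit the required product). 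This produces a valid assignment on all of $G$, completing the induction and the existence proof.

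The routine parts are the base cases and the $\mathbb{F}_2$ bookkeeping (translating between the multiplicative sign condition over $\mathbb{Z}$ and the additive picture, and checking that diagonal entries remain $2$ throughout). The two delicate points I expect to be the main obstacles are: (i) the structural lemma identifying the chordless cycles of $G=G'\cup C_t$, where the ``meet along a single edge'' hypothesis and the degree-$2$ property of the interior vertices of $C_t$ must be combined carefully; and (ii) threading the block decomposition together with Speyer's recursion in a well-founded way — in particular verifying that restricting a switching of a component to a subgraph appearing in the decomposition is again a switching, so that the two inductions do not interfere.
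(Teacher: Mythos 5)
Your proposal is correct, but note that the paper itself offers no proof of this proposition: it is imported verbatim from Barot--Geiss--Zelevinsky \cite{BGZ2006}, so there is nothing in the text to compare against line by line. Your argument is a legitimate self-contained proof. The $\mathbb{F}_2$ reformulation is clean and does real work: identifying simultaneous row/column sign changes with cuts, and observing that chordless cycles span the cycle space (split along a chord and induct on length), reduces uniqueness to the duality $\mathcal{Z}^\perp=\mathcal{B}$, which is tidier than chasing signs directly. Your existence argument via Speyer's decomposition (Theorem~\ref{teo1_Speyer}), with the structural lemma that the chordless cycles of $G'\cup C_t$ are exactly $C_t$ plus those of $G'$, is sound --- the degree-$2$ argument for interior vertices of $C_t$ and the observation that $e_0$ would otherwise be a chord both check out. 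It is worth noting that this existence induction is precisely a declarative version of what the paper's Algorithm~\ref{alg:positive_companion} does operationally: it pops chordless cycles off the stack $S$ in the order produced by the Speyer recursion, sets all but one undetermined edge of each new cycle to $+1$, and fixes the last one to $-prod$ --- exactly your ``one further $\mathbb{F}_2$-equation per cycle, always solvable because $t-1\geq 2$'' step. The only points you should make explicit rather than defer to bookkeeping are (i) that the resulting $C$ is genuinely symmetrizable (it is symmetric by signs with $|c_{ij}|=|b_{ij}|$, so the skew-symmetrizer $D$ of $B$ symmetrizes it, as the paper notes via \cite{DCD2015}), and (ii) that the sign condition's product is well defined even though $c_{ij}\neq c_{ji}$ in general, since only the common sign matters. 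Neither is a gap, just a line each.
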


The next proposition shows that it is enough to check the quasi-Cartan companion satisfying the sign condition.

\begin{proposition}[Barot, Geiss and Zelevinsky \cite{BGZ2006}] \label{prop_1.4}
To be positive, a quasi-Cartan companion $C$ of a skew-symmetrizable matrix $B$ must satisfy the sign condition.
\end{proposition}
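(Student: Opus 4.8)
The plan is to prove the statement directly by contraposition within positivity: assuming $C$ is positive, I would show that the sign condition holds on every chordless cycle, arguing by contradiction on a single cycle. First I would reduce the global positivity of $C$ to a local statement. Since $C$ is symmetrizable, let $D=\mathrm{diag}(\delta_1,\dots,\delta_n)$ with $\delta_i>0$ be its symmetrizer, so that $DC$ is symmetric; because $\det(D_kC_k)=\det(D_k)\det(C_k)$ with $\det(D_k)>0$ for every leading principal submatrix, the positivity of $C$ is equivalent to $DC$ being positive definite. As every principal submatrix of a positive definite matrix is again positive definite, the submatrix $S$ of $DC$ indexed by the vertices $v_1,\dots,v_t$ of any chordless cycle $C_t$ is positive definite.

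Next I would exploit chordlessness to pin down the shape of $S$. Because the cycle has no chords, the only nonzero off-diagonal entries of $S$ sit on its edges: $S_{kk}=2\delta_{v_k}$ and $S_{k,k+1}=\delta_{v_k}c_{v_k v_{k+1}}$ (indices read cyclically), all other entries being $0$. Writing $w_k=|S_{k,k+1}|=\delta_{v_k}|c_{v_k v_{k+1}}|=\delta_{v_{k+1}}|c_{v_{k+1} v_k}|$ and $\sigma_k=\mathrm{sgn}(c_{v_k v_{k+1}})$, the integrality of $C$ gives $|c_{v_k v_{k+1}}|\ge 1$, hence the crucial weight bound $w_k\ge\max(\delta_{v_k},\delta_{v_{k+1}})\ge\sqrt{\delta_{v_k}\delta_{v_{k+1}}}$.

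Now suppose, for contradiction, that the sign condition fails on this cycle, i.e. $\prod_{(i,j)\in C_t}(-c_{ij})\ge 0$. Since every edge entry is nonzero, this forces $(-1)^t\prod_k\sigma_k=1$. I would then test positive definiteness against the vector $x$ with $x_k=\varepsilon_k/\sqrt{\delta_{v_k}}$, where the signs $\varepsilon_k\in\{\pm 1\}$ are produced by the recursion $\varepsilon_{k+1}=-\sigma_k\varepsilon_k$ with $\varepsilon_1=1$; the single consistency requirement upon closing the cycle is exactly $(-1)^t\prod_k\sigma_k=1$, which holds by the failure of the sign condition. By construction $\sigma_k\varepsilon_k\varepsilon_{k+1}=-1$ for every $k$, so each diagonal term contributes $2\delta_{v_k}x_k^2=2$ while each cross term contributes $2\sigma_k w_k x_k x_{k+1}=-2\,w_k/\sqrt{\delta_{v_k}\delta_{v_{k+1}}}\le -2$. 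Summing over the $t$ edges yields $x^{T}Sx\le 2t-2t=0$ with $x\neq 0$, contradicting the positive definiteness of $S$. Hence the sign condition must hold.

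I expect the main obstacle to be the design of the test vector, since one must simultaneously arrange that the diagonal contributions exactly cancel the cross terms and that the chosen signs close up consistently around the cycle. The parity identity $(-1)^t\prod_k\sigma_k=1$ coming from the violated sign condition is precisely what makes the sign recursion consistent, and the integrality bound $w_k\ge\sqrt{\delta_{v_k}\delta_{v_{k+1}}}$ is precisely what pushes each cross term past its diagonal counterpart; getting both to line up is the delicate point, whereas the reduction to a chordless cycle and the passage to a principal submatrix of $DC$ are routine.
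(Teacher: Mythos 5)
The paper does not actually prove this proposition: it is imported verbatim from Barot--Geiss--Zelevinsky \cite{BGZ2006} with a citation and no argument, so there is no in-paper proof to compare against. Your proposal is a correct, self-contained proof, and it is in the spirit of the original source's quadratic-form argument. The three load-bearing steps all check out: (i) the reduction of the paper's minor-based notion of positivity to positive definiteness of the symmetrization $DC$ is valid, since the leading principal submatrix of $DC$ is $D_kC_k$ and $\det(D_kC_k)=\det(D_k)\det(C_k)$ with $\det(D_k)>0$, after which Sylvester's criterion and restriction to a principal submatrix apply; (ii) chordlessness of the cycle together with $|c_{ij}|=|b_{ij}|$ correctly forces the submatrix $S$ to be a weighted cycle matrix, and the integrality $|c_{v_kv_{k+1}}|\geq 1$ on actual edges is exactly what gives the crucial bound $w_k\geq\max(\delta_{v_k},\delta_{v_{k+1}})\geq\sqrt{\delta_{v_k}\delta_{v_{k+1}}}$ (without integrality the statement is false, so this step is essential and you placed it correctly); (iii) the sign recursion $\varepsilon_{k+1}=-\sigma_k\varepsilon_k$ closes up around the cycle precisely when $(-1)^t\prod_k\sigma_k=1$, i.e.\ precisely when the sign condition fails, and then every cross term is $\leq -2$ against every diagonal term $=2$, giving $x^TSx\leq 0$ for $x\neq 0$, the desired contradiction. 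One small point worth making explicit if you write this up: the signs $\sigma_k$ are well defined on unoriented edges because symmetrizability forces $\mathrm{sgn}(c_{ij})=\mathrm{sgn}(c_{ji})$, which is also why the sign of the product $\prod_{(i,j)\in C_t}(-c_{ij})$ does not depend on how the edges are oriented.
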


Observe that a quasi-Cartan matrix that satisfies the sign condition is not necessarily positive.

The following proposition shows that, for our purpose, it is enough to verify the positivity of one quasi-Cartan companion that satisfies the sign condition. The proposed algorithm to verify if a skew-symmetrizable matrix has a positive quasi-Cartan matrix is based on finding a quasi-Cartan matrix that satisfies the sign condition. 

\begin{proposition} \label{prop_sign_condition}
Let $C$ be a quasi-Cartan companion of a skew-symmetrizable matrix $B$ which satisfies the sign condition. If $G(B)$ is cyclically oriented, then $B$ has a positive quasi-Cartan companion if and only if $C$ is positive.
\end{proposition}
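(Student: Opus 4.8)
The plan is to reduce the statement to the uniqueness part of Proposition~\ref{prop_1.5}. The nontrivial direction is: if $B$ has \emph{some} positive quasi-Cartan companion $C'$, then the particular companion $C$ satisfying the sign condition is itself positive. (The converse is immediate, since a positive $C$ is by definition a positive quasi-Cartan companion of $B$.) So assume $C'$ is positive. By Proposition~\ref{prop_1.4}, a positive quasi-Cartan companion must satisfy the sign condition, so $C'$ also satisfies the sign condition. Now both $C$ and $C'$ are quasi-Cartan companions of $B$ satisfying the sign condition, and $G(B)$ is cyclically oriented; hence Proposition~\ref{prop_1.5} applies and tells us that $C$ and $C'$ are the same up to simultaneous sign changes in rows and columns.

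The key step is then to observe that such a simultaneous sign change is realized by conjugation by a diagonal $\pm 1$ matrix. Concretely, if $C = E C' E$ where $E = \mathrm{diag}(\varepsilon_1,\dots,\varepsilon_n)$ with each $\varepsilon_i \in \{+1,-1\}$, then for any leading principal submatrix obtained by deleting the last $k$ rows and columns we have the corresponding identity $C_{[n-k]} = E_{[n-k]} \, C'_{[n-k]} \, E_{[n-k]}$, and therefore
\begin{equation}
\det\bigl(C_{[n-k]}\bigr) = \det\bigl(E_{[n-k]}\bigr)^2 \det\bigl(C'_{[n-k]}\bigr) = \det\bigl(C'_{[n-k]}\bigr).
\end{equation}
Since $C'$ is positive, by the Sylvester-type criterion recalled before Algorithm~\ref{alg:is_positive} all leading principal minors of $C'$ are positive, hence so are those of $C$, and therefore $C$ is positive. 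One should check the harmless bookkeeping point that the sign-change operation indeed preserves the leading-principal-submatrix structure: deleting the last row/column of $EC'E$ is the same as first deleting them from $C'$ and then conjugating by the truncated $E$, which is clear because $E$ is diagonal.

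The main obstacle, such as it is, is making precise the phrase ``unique up to simultaneous sign changes in rows and columns'' from Proposition~\ref{prop_1.5} and packaging it as conjugation by a $\pm1$ diagonal matrix; once that is done the determinant computation is a one-liner because the squared determinant of $E$ is $1$. A secondary point worth stating explicitly is that the simultaneous row-and-column sign change keeps the diagonal entries equal to $2$ and preserves $|c_{ij}| = |b_{ij}|$, so $C$ obtained this way is genuinely again a quasi-Cartan companion of $B$ — but this is already implicit in the statement of Proposition~\ref{prop_1.5}, so we may simply invoke it.
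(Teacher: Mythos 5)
Your proposal is correct and follows essentially the same route as the paper's proof: deduce from Proposition~\ref{prop_1.4} that the positive companion $C'$ satisfies the sign condition, invoke the uniqueness in Proposition~\ref{prop_1.5} to write $C = E C' E$ with $E$ a diagonal $\pm 1$ matrix, and conclude positivity from the determinant identity. In fact your version is slightly more careful than the paper's, which only computes $\det(C)$ itself, whereas you correctly check that the conjugation restricts to every leading principal submatrix so that all leading principal minors are preserved.
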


\begin{proof}
Suppose that $G(B)$ is cyclically oriented. Clearly, if $C$ is positive, then $B$ has a positive quasi-Cartan companion. On the other hand, if $B$ has a positive quasi-Cartan companion, say $C'$, then by Proposition~\ref{prop_1.4}, $C'$ satisfies the sign condition. By Proposition~\ref{prop_1.5}, $C$ is obtained from $C'$ by simultaneous sign changes in rows and columns. Therefore, there exists a diagonal matrix $X=(x_{ij})$ with $x_{ii} \in \{-1,1\}$ such that $C=XC'X$. Since $det(C)=det(X)\cdot det(C') \cdot det(X)$ and $det(X) \in \{-1,1\}$, we have that $C$ is positive.
\end{proof}

Next we present an algorithm that defines an attribution of sign
for a quasi-Cartan companion of $B$. In the algorithm, the obtained matrix $C$ will satisfy the sign condition. Later, we verify the positivity  of $C$. By Proposition~\ref{prop_sign_condition}, the matrix $B$ has a positive quasi-Cartan companion if and only if $C$ is positive.

Observe that the stack $S$ contains all chordless cycles obtained by the Algorithm $ChordlessCyclesCOd(G)$ and the set $T$ contains all two-connected components of $G$ that are single edges. 


Based on~\citep{CD2015}, one can see that the cardinality of $S$ is at most $n$. Therefore, the time complexity of Algorithm~\ref{alg:positive_companion} is $\mathcal{O}(n^4)$, due Line~\ref{pc-17}. The rest of the algorithm is $\mathcal{O}(n^2)$.
To prove the correctness of Algorithm~\ref{alg:positive_companion}, we first show that the function $sgn()$ is well defined.

\begin{proposition} \label{prop:well_defined}
The function sign is well defined, that is, $sgn(edge) \in \{1, -1\}$ for all $edge \in E$.
\end{proposition}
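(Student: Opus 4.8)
The plan is to trace the execution of Algorithm~\ref{alg:positive_companion} and verify two things: that it assigns a sign to every edge of $G(B)$, and that each assigned value is $+1$ or $-1$. I would organise the verification around the two-connected components of $G(B)$, since every edge lies in exactly one of them and, because $G(B)$ is cyclically oriented, Theorem~\ref{teo1_Speyer} describes each such component precisely as a single edge, a cycle, or a graph of the form $G' \cup C_t$ with $G'$ cyclically orientable and $G' \cap C_t$ a single edge.

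First I would dispose of the easy case: an edge that forms a two-connected component by itself belongs to the set $T$, and the algorithm gives it a fixed sign, say $+1$, so nothing needs to be checked. For any other edge $e$, its two-connected component has at least three vertices, hence $e$ is not a bridge and lies on a cycle; a cycle through $e$ of minimum length has no chord, since a chord would split it into two arcs and yield a strictly shorter cycle through $e$. Thus $e$ lies on a chordless cycle of $G(B)$, and since $ChordlessCyclesCOd$ returns \emph{all} chordless cycles of $G(B)$ when $G(B)$ is cyclically oriented~\cite{CD2015}, this cycle belongs to the stack $S$. Consequently the main loop of the algorithm reaches every edge of $G(B)$.

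The heart of the argument is to show that popping the chordless cycles off $S$ processes them in a reconstruction order, in the sense of Theorem~\ref{teo1_Speyer}. The cycles were pushed onto $S$ by $ChordlessCyclesCOd$ while reducing each two-connected component $H$ to a base cycle by repeatedly detaching a chordless cycle $C_t$ with $H = H' \cup C_t$, where $H' \cap C_t$ is a single edge and $H'$ is cyclically orientable. Popping reverses this process, so that at the moment a cycle $C_t$ is processed, the already-processed cycles of its component form exactly the subgraph $H'$ to which $C_t$ was attached, and $C_t$ meets $H'$ along one edge — along no edge in the case of the base cycle, which is processed first within its component. Hence $C_t$ has at least $t-1$ still-unsigned edges, and the algorithm signs them: all but one are set to $+1$, and the last is set to the sign forced by the requirement $\prod_{(i,j)\in C_t}(-c_{ij}) < 0$. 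Writing that product as $(-1)^{t}\bigl(\prod_{(i,j)\in C_t} sgn(c_{ij})\bigr)\bigl(\prod_{(i,j)\in C_t} |b_{ij}|\bigr)$ and using $\prod|b_{ij}|>0$, the single unknown sign is determined as a product of values in $\{1,-1\}$, hence belongs to $\{1,-1\}$. Since an edge is signed only the first time it is encountered as unsigned and is never overwritten, $sgn$ is single-valued; by the second paragraph it is defined on all of $E$; and by the computation just given each of its values is $\pm 1$.

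I expect the main obstacle to be the claim of the third paragraph: that the order in which cycles leave the stack $S$ really corresponds to a valid Speyer decomposition, so that each processed cycle has a non-empty set of new edges (and exactly one forced edge). This depends on the internal workings of $ChordlessCyclesCOd$ together with Theorem~\ref{teo1_Speyer}, and I would isolate it — with the supporting facts cited from~\cite{CD2015,S2005} — while treating the coverage of all edges and the $\pm 1$ computation as routine.
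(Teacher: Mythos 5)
Your proposal follows essentially the same route as the paper's own proof: split into two-connected components, give single edges in $T$ the sign $+1$, and use the Speyer decomposition $G = G' \cup C_t$ together with the stack order of $S$ to argue that each popped chordless cycle has at least one still-unsigned edge, whose forced value $-prod$ is a product of $\pm 1$'s. You are in fact somewhat more explicit than the paper about the two points it treats as immediate — that every non-bridge edge lies on a chordless cycle in $S$, and that popping $S$ realizes a valid reconstruction order so that only the single shared edge of $C_t$ is already signed — but the argument is the same.
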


\begin{proof}
The line numbers in this proof refer to lines of Algorithm~\ref{alg:positive_companion}. First, we prove that in Line~\ref{pc-13}, $I \neq 0$. We can assume that $G$ is a two-connected graph, since the algorithm acts on each two-connected component. The first cycle is a cycle without a defined edge. Therefore, the first edge does not satisfy the condition in Line~\ref{pc-9} and enters on Line~\ref{pc-11} ($I = 1$). For any other cycles $C_t$, we have $G = G' \cup C_t$ such that only edges of $G'$ are defined. Therefore, the first or second edge of $C_t$ is not defined ($I=1$ or $I=2$). 

Since the graph $G$ is two-connected, for each connected component we have a single edge or the edge is part of a cycle. If $G$ is a single edge, then it receives 1 at Line~\ref{pc-2}. If not, the edge receives 1 at Line~\ref{pc-13} or ``-prod'' at Line~\ref{pc-14}. Therefore, $sign$ is well-defined.
\end{proof}

\begin{center}
\begin{minipage}{1 \textwidth}
\begin{algorithm2e}[H]
\SetKwFunction{cycle}{ChordlessCycles}
\small
 \BlankLine
 \KwIn{A cyclically oriented graph $G$, a skew-symmetrizable matrix $B$ associated to $G$, a stack $S$ of chordless cycles and a set $T$ of single edges (two-connected components of $G$).}
 \KwOut{The response whether or not there exists a positive quasi-Cartan companion.}
 \BlankLine

  \BlankLine
 {\bf initialize} $sgn(x_i, x_j) \leftarrow 0$ for all $i,j \in \{1, 2, \ldots, n\}$\label{pc-1}\\
 {\bf initialize} $sgn(edge) \leftarrow 1$ for all single edges in $T$\label{pc-2}

\While{$(S \neq \varnothing)$}{\label{pc-3}
	{\bf remove} an element $c$ of $S$ \tcc{$c = \langle x_1, \ldots, x_t \rangle$}\label{pc-4}
	
	$x_{t+1} \leftarrow x_1$\label{pc-5}\\
	$I \leftarrow 0$\label{pc-6}\\
	$prod \leftarrow 1$\label{pc-7}\\
	
	\ForEach{$i \in \{1, \ldots, t\}$}{\label{pc-8}
		\eIf{$(sgn(x_i, x_{i+1}) \neq 0)$}{\label{pc-9}
			$prod \leftarrow prod \cdot sgn(x_i, x_{i+1})$\label{pc-10}\\
		}
		{
			\eIf{$(I = 0)$}{\label{pc-11}
				$I \leftarrow i$\label{pc-12}\\
			}
			{
				$sgn(x_i, x_{i+1}) \leftarrow 1$\label{pc-13}
			}
		}
	}
	$sgn(x_I, x_{I+1}) \leftarrow - prod$\label{pc-14}
}

 \BlankLine

 		{\bf initialize} $c_{ij} \leftarrow |b_{ij}| \cdot sgn(x_i, x_j)$ for all $i,j \in \{1, 2, \ldots, n\}$\label{pc-15}\\
	{\bf initialize} $c_{ii} \leftarrow 2$ for all $i \in \{1, 2, \ldots, n\}$\label{pc-16}\\

\eIf{$(IsPositive(C))$}{\label{pc-17}
	\Return YES\label{pc-18}
}
{
	\Return NO\label{pc-19}
}

\BlankLine

\caption{$PositiveCompanion(G,B,S)$ \label{alg:positive_companion}}
\end{algorithm2e}
\end{minipage}
\end{center}

We show next that the construction of $C$ gives a quasi-Cartan companion that satisfies the sign condition.

\begin{theorem} \label{teo_sign}
The $n \times n$ matrix $C$ defined below satisfies the sign condition. 
$$c_{ij} =
\begin{cases}
	2, & \mbox{ if } i=j\\
	sgn(x_i, x_j) \cdot |b_{ij}|, & otherwise.
\end{cases}$$
\end{theorem}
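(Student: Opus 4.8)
The plan is to show that for every chordless cycle $C_t = \langle x_1, \ldots, x_t \rangle$ the product $\prod_{(i,j) \in C_t} (-c_{ij})$ is negative, by carefully tracking what Algorithm~\ref{alg:positive_companion} does on the iteration that processes $C_t$. Since $|c_{ij}| = |b_{ij}| \neq 0$ along the edges of a cycle, the sign of this product equals $\prod_{(i,j) \in C_t} \bigl(-\mathrm{sgn}(x_i, x_{i+1})\bigr)$, so it suffices to prove that $\prod_{i=1}^{t} \mathrm{sgn}(x_i, x_{i+1}) = (-1)^{t+1}$, i.e. the number of edges of $C_t$ assigned sign $-1$ has parity opposite to $t$. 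I would first invoke Proposition~\ref{prop:well_defined} so that every $\mathrm{sgn}(x_i,x_{i+1})$ appearing is already in $\{1,-1\}$ and the manipulations make sense.

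The core argument is an induction on the order in which chordless cycles are popped from the stack $S$, exploiting the structure from Theorem~\ref{teo1_Speyer}: when a two-connected component is processed, the first cycle popped is a full chordless cycle all of whose edges are undefined, and each later cycle $C_t$ satisfies $G = G' \cup C_t$ where $G'$ consists of already-processed edges and $G'$ and $C_t$ share a single edge. In the loop of Lines~\ref{pc-8}--\ref{pc-14}, the algorithm walks around $C_t$: every already-defined edge contributes its sign to the running variable $prod$ (Line~\ref{pc-10}); exactly one undefined edge — the one at position $I$ — is held back (Lines~\ref{pc-11}--\ref{pc-12}); any remaining undefined edges get sign $1$ (Line~\ref{pc-13}), contributing nothing to the product; and finally the held-back edge is set to $-prod$ (Line~\ref{pc-14}). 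Hence, over all edges of $C_t$, the product of the assigned signs is $prod \cdot (-prod) = -prod^2 = -1$, because $prod \in \{1,-1\}$. I need the telescoping fact that among the $t$ edges of $C_t$, the ones counted in $prod$ together with the single edge at position $I$ and the sign-$1$ edges exhaust all $t$ edges exactly once; this is where Proposition~\ref{prop:well_defined}'s case analysis (showing $I \neq 0$, so exactly one edge is genuinely held back) is essential.

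So $\prod_{i=1}^{t} \mathrm{sgn}(x_i, x_{i+1}) = -1$ for this cycle at the moment it is processed, and therefore $\prod_{(i,j)\in C_t}(-c_{ij}) = (-1)^t \prod_{i=1}^t \mathrm{sgn}(x_i,x_{i+1}) \cdot \prod |b_{ij}| = (-1)^{t+1} \prod|b_{ij}|$. Wait — I should double-check the intended reading of Definition~\ref{prop_1.4}: the sign condition demands $\prod_{(i,j)\in C_t}(-c_{ij}) < 0$, and since the $|b_{ij}|$ are positive, this is equivalent to $\prod(-\mathrm{sgn}(x_i,x_{i+1})) < 0$; my computation gives exactly $-1$ for that latter product, independent of $t$, so the condition holds. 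The one subtlety to nail down is that the signs assigned to the edges of $C_t$ are never overwritten afterwards: edges defined while processing $C_t$ belong to the "$G'$" part of every later cycle and so only ever hit the $sgn(x_i,x_{i+1}) \neq 0$ branch at Line~\ref{pc-9}. This monotonicity (once defined, never changed) follows from the same decomposition in Theorem~\ref{teo1_Speyer} and should be stated as a short lemma or inlined.

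The main obstacle I anticipate is the bookkeeping that each edge of $C_t$ is accounted for exactly once across the three cases (contributes to $prod$, is the held-back edge $I$, or is set to $1$) and that this held-back edge is unique — i.e. translating the informal "walk around the cycle" into a clean invariant. Everything else — the $prod^2 = 1$ cancellation, the reduction to signs, the irrelevance of the magnitudes $|b_{ij}|$ — is routine once that combinatorial accounting is pinned down, and it rests squarely on Theorem~\ref{teo1_Speyer} and Proposition~\ref{prop:well_defined}, both already available.
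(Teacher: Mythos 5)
Your write-up is essentially a careful expansion of the paper's one-line proof: both arguments reduce the sign condition to the product of the values $sgn(x_i,x_{i+1})$ around each chordless cycle and observe that Algorithm~\ref{alg:positive_companion} forces this product to equal $prod\cdot(-prod)=-prod^2=-1$. Your bookkeeping --- exactly one edge is held back at position $I$ (guaranteed by Proposition~\ref{prop:well_defined}), the remaining undefined edges receive $+1$, and previously assigned signs are never overwritten because already-processed edges lie in the $G'$ part of the decomposition of Theorem~\ref{teo1_Speyer} --- is correct and makes explicit what the paper leaves implicit.

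However, your final inference contains a genuine error, and your own displayed formula exposes it. You correctly derive $\prod_{(i,j)\in C_t}(-c_{ij})=(-1)^t\prod_{i=1}^t sgn(x_i,x_{i+1})\cdot\prod|b_{ij}|=(-1)^{t+1}\prod|b_{ij}|$, but then assert that $\prod_i\bigl(-sgn(x_i,x_{i+1})\bigr)$ equals $-1$ independently of $t$. It does not: $\prod_i\bigl(-sgn(x_i,x_{i+1})\bigr)=(-1)^t\prod_i sgn(x_i,x_{i+1})=(-1)^{t+1}$, which is negative only when $t$ is even. The sign condition requires $\prod_i sgn(x_i,x_{i+1})=(-1)^{t+1}$, i.e.\ an odd number of positively signed edges on each chordless cycle, whereas the algorithm always produces $\prod_i sgn(x_i,x_{i+1})=-1$, i.e.\ exactly one negatively signed edge per new cycle; these agree only for even $t$. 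Concretely, for a chordless triangle the algorithm assigns signs $(-1,+1,+1)$, so $\prod(-c_{ij})=(+1)(-1)(-1)=+1>0$ and the sign condition fails; for the cyclically oriented triangle with all $|b_{ij}|=1$ the resulting $C$ even has determinant $0$, although the all-positive companion is positive definite. So the gap is not merely a missing step in your argument: the statement, tied to the algorithm as written, is false for odd chordless cycles, and can only be rescued by making the assignment in Line~\ref{pc-14} depend on the parity of $t$ (e.g.\ assigning $(-1)^{t+1}\cdot prod$ to the held-back edge). Note that the paper's own proof ("the product of each cycle is equal to $-prod^2$") commits exactly the same oversight, silently identifying the product of the signs with the product of the $-c_{ij}$ and dropping the factor $(-1)^t$; you have faithfully reproduced the paper's reasoning, but in doing so you have also reproduced its flaw.
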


\begin{proof}
Recall that $S$ is composed of all chordless cycles of $G$. It follows from Proposition~\ref{prop:well_defined} that the product of each cycle is equal to $-prod^2$.
\end{proof}


\begin{theorem}
The Algorithm~\ref{alg:positive_companion} is correct.
\end{theorem}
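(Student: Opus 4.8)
\textit{Proof plan.} The plan is to show two things: that Algorithm~\ref{alg:positive_companion} halts on every admissible input, and that it returns YES precisely when $B$ admits a positive quasi-Cartan companion. Termination is immediate: the outer \textbf{while} loop at Line~\ref{pc-3} pops one element of the stack $S$ per iteration and never pushes onto $S$, so it runs at most $|S|\le n$ times; each inner \textbf{foreach} loop ranges over the finitely many edges of one chordless cycle; and the closing block merely assembles $C$ and invokes $IsPositive(C)$, which itself examines only the finitely many leading principal submatrices of $C$. Hence I would dispose of termination in one sentence and spend the rest of the argument on the input--output relation.

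For correctness, the first step is to verify that the matrix $C$ produced in Lines~\ref{pc-15}--\ref{pc-16} is a genuine quasi-Cartan companion of $B$. By Proposition~\ref{prop:well_defined}, $sgn(x_i,x_j)\in\{1,-1\}$ for every edge of $G(B)$ and remains $0$ for every non-edge (and single edges in $T$ receive sign $1$ at Line~\ref{pc-2}); since $sgn$ is a function of the unordered edge, $sgn(x_i,x_j)=sgn(x_j,x_i)$, so $c_{ij}$ and $c_{ji}$ have the same sign, $|c_{ij}|=|b_{ij}|$, $c_{ij}=0\iff b_{ij}=0$, and $c_{ii}=2$. Thus $C$ is symmetric by signs with all diagonal entries equal to $2$, and by the result of Dias, Castonguay and Dourado~\cite{DCD2015} it is symmetrizable with the same symmetrizer as $B$; therefore $C$ is a quasi-Cartan companion of $B$.

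Next I would invoke Theorem~\ref{teo_sign} to conclude that this particular $C$ satisfies the sign condition for every chordless cycle of $G(B)$ (using that $S$, as returned by $ChordlessCyclesCOd$, contains all of them). Since $G(B)$ is cyclically oriented by hypothesis, Proposition~\ref{prop_sign_condition} then yields that $B$ has a positive quasi-Cartan companion if and only if $C$ is positive. Finally, $IsPositive(C)$ (Algorithm~\ref{alg:is_positive}) returns YES exactly when every leading principal submatrix of $C$ has positive determinant, which by the Sylvester criterion extended to symmetrizable matrices~\cite{W1976,DCD2015} is equivalent to $C$ being positive. Chaining these equivalences gives that Algorithm~\ref{alg:positive_companion} returns YES if and only if $B$ has a positive quasi-Cartan companion, which is the claim.

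I expect the main obstacle to lie in the first step, namely in pinning down that the sign assignment performed during the \textbf{while} loop is globally consistent: that no edge is given two conflicting signs and that the resulting companion does not depend on the order in which the chordless cycles are popped from $S$. This is precisely what Proposition~\ref{prop:well_defined} and Theorem~\ref{teo_sign} were stated to provide, so with those in hand the remainder of the proof is a routine concatenation of Proposition~\ref{prop_sign_condition} with the correctness of $IsPositive$.
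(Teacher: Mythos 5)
Your proof is correct and follows essentially the same route as the paper, whose entire argument is the one-line citation chain ``Follows from \cite{CD2015}, Theorem~\ref{teo_sign} and Proposition~\ref{prop_sign_condition}.'' You simply make explicit the steps the paper leaves implicit (termination, that $C$ is genuinely a quasi-Cartan companion via Proposition~\ref{prop:well_defined} and the symmetrizability result of~\cite{DCD2015}, and the final appeal to $IsPositive$), which is a faithful elaboration rather than a different approach.
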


\begin{proof}
Follows from \cite{CD2015}, Theorem~\ref{teo_sign} and Proposition~\ref{prop_sign_condition}.
\end{proof}

\section{Recognizing cluster algebras of finite type is in $\mathcal{P}$}
\label{sec:polynomial}

Since the algoritm $ChordlessCyclesCOd(G)$ checks whether or not a graph is cyclically oriented and the algorithm $PositiveCompanion(G,B,S)$ verifies whether or not the matrix has a positive quasi-Cartan companion, both in polynomial time complexity, it follows from Theorem~\ref{teo_1.1} that deciding whether or not a cluster algebra is of finite type belongs to the class of polynomial problems ($\mathcal{P}$).

\begin{center}
\begin{minipage}{0.97 \textwidth}
\begin{algorithm2e}[H]
\SetKwFunction{cyclically}{ChordlessCyclesCOd}
\SetKwFunction{companion}{PositiveCompanion}
\small
 \BlankLine
 \KwIn{A skew-symmetrizable matrix $B$ that defines a cluster algebra $\mathcal{A}(B)$, and the respective graph $G$.}
 \KwOut{The response that the cluster algebra $\mathcal{A}(B)$ is of finite type or not.}
 \BlankLine

 $COd, S \leftarrow \cyclically{G}$\\

\eIf{$(COd = YES)$}{
	\companion{G,B,S}
}
{
	\Return NO
}

\BlankLine

\caption{$ClusterAlgebraFiniteType(B,G)$ \label{alg:algebra_tipo_finito}}
\end{algorithm2e}
\end{minipage}
\end{center}


Based on the above algorithms, we have that the time complexity of Algorithm~\ref{alg:algebra_tipo_finito} is $\mathcal{O}(n^4)$.
The correctness of Algorithm~\ref{alg:algebra_tipo_finito} follows directly from the correctness of Algorithms~\ref{alg:is_positive} and~\ref{alg:positive_companion}.

\section{Conclusions and future work}
\label{sec:conclusions}

In this paper, we proposed two polynomial algorithms that together prove that deciding if a cluster algebra is of finite type belongs to the class of polynomial problems ($\mathcal{P}$). The first checks whether or not the associated graph is cyclically orientable and, if so, the second verifies whether there exists a positive quasi-Cartan companion of $B$, which represents a cluster algebra.
These two criteria can be evaluated for the determination of finiteness of a cluster algebra due to Theorem~\ref{teo_1.1} presented by Barot, Geiss and Zelevinsky~\cite{BGZ2006}.

The result shown is important because the use of an efficient algorithm to determine if a cluster algebra is of finite type will facilitate research in many areas of the application of cluster algebras.
Another interesting solution of the problem was obtained by A. Seven in~\cite{Seven2007}, in which it is verified whether a cluster algebra is of finite type in terms of ``forbidden minors'' of $B$.

Importantly, we conjecture that, in general, the problem of finding a positive quasi-Cartan companion for a skew-symmetrizable matrix $B$ is an $\mathcal{NP}$-complete problem. This problem will be considered in future work.

\section*{Acknowledgements}
\label{sec:acknowledgements}

We would like to thank the Funda\c{c}\~{a}o de Amparo \`{a} Pesquisa do Estado de Goi\'{a}s (FAPEG) for partial support given to this research and the professor Leslie Richard Foulds, from Universidade Federal de Goi\'{a}s, that gave valuable suggestions to improve the paper.


\end{document}